\newtheorem{theorem}{Theorem}
\newtheorem{lemma}[theorem]{Lemma}
\newtheorem{thm}[theorem]{Theorem}
\newtheorem{lem}[theorem]{Lemma}
\newtheorem{question}[theorem]{Question}
\def\\{\cr}
\def\({\left(}
\def\){\right)}
\def\[{\left[}
\def\]{\right]}
\def\<{\langle}
\def\>{\rangle}
\def\cC{\mathcal C}
\def\cF{\mathcal F}
\def\cE{\mathcal E}
\def\cH{\mathcal H}
\def\cI{\mathcal I}
\def \csE{\overline \cE}
\def \csC{\overline \cC}
\def\Z{\mathbb{Z}}
\def\mand{\qquad \mbox{and} \qquad}
\def\notdivides{\mathrel{\kern-3pt\not\!\kern3.5pt\bigm|}}
\begin{document}

\title{On Shifted Eisenstein Polynomials}

\author{
{\sc Randell Heyman}\\
{Department of Computing, Macquarie University} \\
{Sydney, NSW 2109, Australia}\\
{\tt randell@unsw.com.au}
\and
{\sc Igor E. Shparlinski}  \\
Deptartment of Computing, Macquarie University \\
Sydney, NSW 2109, Australia\\
{\tt igor.shparlinski@mq.edu.au}
}

\date{ }
\maketitle

\begin{abstract}
We study polynomials with integer coefficients which become Eisenstein polynomials after the
additive shift of a variable. We call such polynomials {\it shifted Eisenstein polynomials\/}.
We determine an upper bound on the maximum shift that is needed given a
shifted Eisenstein polynomial  and also provide a lower bound on the
density of shifted Eisenstein polynomials, which is strictly greater than the
density of classical Eisenstein polynomials.
We also show that the number of irreducible degree $n$ polynomials that are not shifted Eisenstein polynomials is infinite.
We conclude with some numerical results on the densities of  shifted Eisenstein polynomials.
\end{abstract}

\section{Introduction}

It is well known that
almost all polynomials  in rather general families of $\Z[x]$ are irreducible, see~\cite{Diet,Zyw}
and references therein.  There are also known polynomial time irreducibility
tests and polynomial time factoring algorithms, see for example~\cite{LLL}.
However,
it is always interesting to study  large classes of polynomials that are  known to be irreducible.

Thus, we recall that
\begin{equation}
\label{eq:poly}
f(x) = a_nx^n +a_{n-1}x^{n-1}+ \ldots +a_1x+a_0 \in \Z[x]
\end{equation}
is called an {\it Eisenstein polynomial\/}, or is said to be
  \emph{irreducible by Eisenstein} if for some prime $p$ we have
\begin{enumerate}
\item[(i)] $p \mid a_i$ for $i=0, \ldots, n-1$,
\item[(ii)] $p^2\nmid a_0$,
\item[(iii)] $p \nmid a_n$.
\end{enumerate}
We sometimes say that $f$ is \emph{irreducible by Eisenstein with respect to prime $p$} if $p$ is one such prime that satisfies the conditions~(i), (ii) and~(iii)
above
(see~\cite{Cox} regarding the early history of the irreducibility criterion).

Recently, motivated by a question of Dobbs and Johnson~\cite{DoJo} several statistical
results about the distribution of Eisenstein polynomials have been obtained.
Dubickas~\cite{Dub} has found the asymptotic density  for {\it monic\/} polynomials $f$ of a given degree $\deg f = n$
and growing height
\begin{equation}
\label{eq:def H}
H(f) = \max_{i=0, \ldots, n} |a_i|.
\end{equation}
The authors~\cite{Hey} have improved the error term in
the asymptotic  formula of~\cite{Dub} and also calculated the
density of general  Eisenstein polynomials.

Clearly the irreducibility of polynomials is preserved under shifting of the
argument by a constant. Thus it makes sense to investigate polynomials
which become Eisenstein polynomials after shifting the argument.
More precisely, here we study polynomials $f(x) \in \Z[x]$ for which there exists an integer $s$ such that
$f(x+s)$ is an Eisenstein polynomial. We call such  $f(x) \in \Z[x]$ a {\it shifted Eisenstein polynomial\/}.
We call the corresponding $s$  an {\it Eisenstein shift of $f$ with respect to $p$\/}.

For example, for $f(x) = x^2+4x+5$, it is easy to see that  $s=-1$
is an Eisenstein shift with respect to $p=2$.

Here we estimate the smallest possible  $s$ which transfers a
shifted Eisenstein polynomial $f(x)$ into an Eisenstein polynomial $f(x+s)$.
We also estimate the density of shifted Eisenstein polynomials and show that it is
strictly greater than the density of  Eisenstein polynomials.
On the other hand, we show that there are  irreducible polynomials that are not
shifted Eisenstein polynomials.

More precisely, let $\cI_n$, $\cE_n$ and $\csE_n$ denote the set of irreducible,
Eisenstein and shifted Eisenstein  polynomials,
of degree $n$ over the integers.

Trivially,
$$
\cE_n \subseteq \csE_n \subseteq \cI_n.
$$
We show that all inclusions are proper and that
$\csE_n \setminus \cE_n$ is quite ``massive''.

\section{Notation}

We define  $\cI_n(H)$, $\cE_n(H)$ and $\csE_n(H)$ as
the subsets of
$\cI_n$, $\cE_n$ and $\csE_n$, respectively, consisting of polynomials of height
at most $H$ (where the height of a polynomial~\eqref{eq:poly} is given by~\eqref{eq:def H}).

For any integer $n\ge 1$, let $\omega(n)$  be the number of distinct prime factors
and let $\varphi(n)$ be the Euler function of $n$ (we also set $\omega(1) =0$).

We also use $\mu$ to denote the M{\" o}bius function, that is,
$$
\mu(n)= \begin{cases} (-1)^{\omega(n)} & \text{if } n \
\text{is square free}, \\
0 & \text{if } n \ \text{otherwise}.
\end{cases}
$$

Finally, we denote the discriminant of the function $f$ by $D(f)$.

The letters $p$ and $q$, with or witho

\section{A bound on Eisenstein shifts via the discriminant}

It is natural to seek a bound on the largest shift required to find a shift if it exists. In fact, for any polynomial, there is a link between the maximum shift that could determine irreducibility and the discriminant.

 The following result is well-known and in fact  in wider generality, can be proven by the theory of
 Newton polygons. Here we give a concise elementary proof.

\begin{lem}\label{first}
Suppose $f \in \Z[x]$ is of degree $n$. If $f(x)$ is a shifted  Eisenstein polynomial then there exists a prime $p$
with $p^{n-1}  \mid D(f)$ and $f(x+s)$ is irreducible by Eisenstein for some $0 \leq s < q$, where $q$ is the
largest of such  primes.
\end{lem}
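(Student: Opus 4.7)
The plan is to split the claim into two halves. First, I will show that whenever $f$ has an Eisenstein shift at a prime $p$, the power $p^{n-1}$ must divide $D(f)$. Second, I will show that any Eisenstein shift at $p$ can be reduced modulo $p$ to one in $[0,p)\subseteq[0,q)$.

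For the divisibility, the key point is that the discriminant is translation-invariant: the formula $D(f)=a_n^{2n-2}\prod_{i<j}(\alpha_i-\alpha_j)^2$ shows that replacing $f(x)$ by $f(x+s)$ merely translates every root and preserves their pairwise differences, so I may assume $f$ itself is Eisenstein at $p$ and aim to prove $p^{n-1}\mid D(f)$. Writing $f(x)=a_n x^n+p\,h(x)$ with $h\in\Z[x]$ of constant term coprime to $p$, any root $\alpha\in\overline{\Q}_p$ satisfies $a_n\alpha^n=-p\,h(\alpha)$, forcing $v_p(\alpha)\ge 1/n$. Since $\sum_i v_p(\alpha_i)=v_p(a_0/a_n)=1$, each lower bound is in fact an equality, and the ultrametric inequality gives $v_p(\alpha_i-\alpha_j)\ge 1/n$ for $i\ne j$. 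Summing over pairs,
\begin{equation*}
v_p(D(f)) \;\ge\; 2\binom{n}{2}\cdot\frac{1}{n} \;=\; n-1.
\end{equation*}

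Next, I will show that if $g(x)=\sum_i b_i x^i$ is Eisenstein at $p$ then so is $g(x+p)$. Expanding by the binomial theorem, the coefficient of $x^j$ in $g(x+p)$ for $0<j<n$ equals $b_j+\sum_{i>j}b_i\binom{i}{j}p^{i-j}$; this is divisible by $p$ because $p\mid b_j$ and each further summand carries either the factor $p^{i-j}\ge p$ (when $i=n$) or the factor $p\mid b_i$ (when $j<i<n$). The new constant term is congruent to $b_0\pmod{p^2}$ since every $b_i p^i$ with $i\ge 1$ is divisible by $p^2$, and the leading coefficient $b_n$ is unchanged. The same argument with $-p$ in place of $p$ handles $g(x-p)$, so any $s_0+kp$ with $k\in\Z$ is again an Eisenstein shift of $f$, and a unique choice of $k$ places it in $[0,p)$. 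Since $p\le q$ by the definition of $q$, this yields an Eisenstein shift in $[0,q)$.

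The main obstacle is the first step: the naive bound $v_p(\alpha_i)\ge 1/n$ on its own produces only $v_p(D(f))\ge 0$, and it is essential to upgrade it to equality using the product-of-roots identity $\prod_i\alpha_i=\pm a_0/a_n$. Once that is done the binomial-expansion argument for the shift is routine, and the concluding inequality $p\le q$ is immediate from the definition of $q$.
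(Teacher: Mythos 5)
Your proof is correct, and its central step is genuinely different from the paper's. For the divisibility $p^{n-1}\mid D(f)$, the paper expresses the discriminant via the $(2n-1)\times(2n-1)$ Sylvester matrix and extracts the power of $p$ from the Leibniz expansion of the determinant; you instead pass to $\overline{\Q}_p$, show every root of an Eisenstein polynomial has valuation $1/n$, and apply the ultrametric inequality to the product formula $D(f)=a_n^{2n-2}\prod_{i<j}(\alpha_i-\alpha_j)^2$. This is precisely the Newton-polygon-flavoured argument that the authors explicitly mention and deliberately set aside in favour of a ``concise elementary proof''; your version is arguably more conceptual and generalises more readily (e.g.\ to other valuations and ramification statements), while theirs avoids any use of $p$-adic completions. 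The remaining ingredients --- translation invariance of the discriminant and the observation that Eisenstein shifts are stable under $s\mapsto s\pm p$, so the shift can be normalised into $[0,p)\subseteq[0,q)$ --- coincide with the paper's. One small remark: your closing paragraph overstates the role of the equality $v_p(\alpha_i)=1/n$. The lower bound $v_p(\alpha_i)\ge 1/n$ alone already gives $v_p(\alpha_i-\alpha_j)\ge\min\bigl(v_p(\alpha_i),v_p(\alpha_j)\bigr)\ge 1/n$ by the ultrametric inequality, so the upgrade to equality via $\prod_i\alpha_i=\pm a_0/a_n$, while correct, is not actually needed; this is a harmless redundancy, not a gap.
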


\begin{proof}
Since $f(x)$ is a shifted Eisenstein polynomial there exists an integer $t$ and a prime $p$
such that $f(x+t)$ is irreducible by Eisenstein with respect to $p$.

Recall that the discriminant of a $n$ degree polynomial can be expressed as the determinant of the $2n-1$ by $2n-1$ Sylvester matrix. Using the Leibniz formula to express the determinant, and examining each summand,
it immediately follows that $p^{n-1} \mid  D(f(x+t))$.  Also, the difference of any two roots of a polynomial is unchanged by increasing both roots by any integer $u$. So, using the definition of the discriminant,
we get $D(f(x))=D(f(x+u))$ for any integer $u$. So it follows that $p^{n-1} \mid D(f(x))$.

Furthermore, by expanding $f(x+t+kp)$ for an arbitrary integer $k$ and examining the  divisibility of coefficients, it follows that if $f(x+t)$ is Eisenstein with respect to prime $q$ then so too is $f(x+t+kp)$.

By appropriate choice of $k$ we can therefore find an integer $s$ with
$$0 \leq s< p \le \max\{q~\text{prime}~:~ q^{n-1}  \mid D(f)\}
$$
such that the polynomial $f(x+s)$ is irreducible by Eisenstein.
\end{proof}

We also recall a classical bound of Mahler~\cite{Mahler} on the discriminant of polynomials over $\Z$.

For  $f(x)$ of the form~\eqref{eq:poly} we define
the {\it length\/}  $L(f)=|a_0|+|a_1|+\ldots +|a_n|$.

\begin{lemma}
\label{Mahler}
Suppose  $f \in\Z[x]$ is of degree $n$. Then
$$|D(f)|\leq n^nL(f)^{2n-2}.$$
\end{lemma}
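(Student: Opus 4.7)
The plan is to express the discriminant through the Vandermonde matrix of the roots of $f$ and apply Hadamard's inequality. Writing $f(x)=a_n\prod_{i=1}^{n}(x-\alpha_i)$ over $\C$, we have
\[
D(f) = a_n^{2n-2}\prod_{1\le i<j\le n}(\alpha_i-\alpha_j)^2 = a_n^{2n-2}(\det V)^2,
\]
where $V$ is the $n\times n$ Vandermonde matrix with entries $V_{ij}=\alpha_j^{\,i-1}$. A column-wise application of Hadamard's inequality gives $|\det V|^2 \le \prod_{j=1}^{n}\sum_{k=0}^{n-1}|\alpha_j|^{2k}$, and each inner sum is crudely at most $n\,\max(1,|\alpha_j|)^{2(n-1)}$, since it consists of $n$ terms each bounded by $\max(1,|\alpha_j|)^{2(n-1)}$.

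Taking the product over $j$ and introducing the Mahler measure $M(f)=|a_n|\prod_j\max(1,|\alpha_j|)$, these two steps combine to yield
\[
|D(f)| \le a_n^{2n-2}\cdot n^n\cdot\bigl(M(f)/|a_n|\bigr)^{2(n-1)} = n^n\, M(f)^{2n-2}.
\]
To finish, I would invoke Landau's classical inequality $M(f)\le\|f\|_2$ (a standard consequence of Jensen's formula on the unit circle), together with the trivial estimate $\|f\|_2^{\,2}=\sum a_i^2\le(\sum|a_i|)^2=L(f)^2$, to conclude that $M(f)\le L(f)$, and hence $|D(f)|\le n^n L(f)^{2n-2}$, as required.

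The point where care is required is producing the sharp exponent $2n-2$. A naive Hadamard bound applied to the full $(2n-1)\times(2n-1)$ Sylvester matrix of $f$ and $f'$ only produces $|D(f)|\le n^n L(f)^{2n-1}$, losing one factor of $L(f)$ even after dividing by $|a_n|\ge 1$. Routing the argument through the $n\times n$ Vandermonde matrix instead interfaces naturally with the Mahler measure $M(f)$ and is the ingredient that extracts the correct exponent $2n-2$.
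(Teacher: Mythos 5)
Your proof is correct: the factorization of $D(f)$ through the squared Vandermonde determinant, the column-wise Hadamard bound $\|v_j\|_2^2=\sum_{k=0}^{n-1}|\alpha_j|^{2k}\le n\max(1,|\alpha_j|)^{2(n-1)}$, and the chain $M(f)\le\|f\|_2\le L(f)$ via Landau's inequality all check out and assemble into exactly the stated bound. The paper itself gives no proof of this lemma — it simply cites Mahler's 1964 paper — and the argument you supply is essentially Mahler's original one (he proves $|D(f)|\le n^nM(f)^{2n-2}$ by this Vandermonde--Hadamard route and then passes to $L(f)$), so there is nothing to compare against beyond noting that your reconstruction matches the cited source, including the correct observation that the exponent $2n-2$ is lost if one applies Hadamard directly to the Sylvester matrix.
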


Combining Lemmas~\ref{first} and~\ref{Mahler} we derive:

\begin{thm}\label{main}
Suppose $f(x)  \in \Z[x]$. If $f(x+s)$ is not irreducible by Eisenstein for all s with $$0 \leq s \leq n^{n/(n-1)}L(f)^2,$$
then $f$ is not a shifted Eisenstein polynomial.
\end{thm}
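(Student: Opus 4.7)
The plan is to argue by contrapositive: assume $f$ is a shifted Eisenstein polynomial and show that there must exist a shift $s$ with $0 \le s \le n^{n/(n-1)}L(f)^2$ making $f(x+s)$ Eisenstein. This reduces the theorem to a direct combination of Lemma~\ref{first} and Lemma~\ref{Mahler}.

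First, I would invoke Lemma~\ref{first}: if $f$ is shifted Eisenstein, then there is a prime $p$ with $p^{n-1} \mid D(f)$ and an integer $s$ with $0 \le s < p$ such that $f(x+s)$ is irreducible by Eisenstein with respect to $p$. Note that $D(f)\ne 0$, since any shifted Eisenstein polynomial is in particular irreducible in $\Z[x]$ (its translate is, by Eisenstein's criterion), hence has only simple roots in $\overline{\Q}$. Consequently $p^{n-1}\le |D(f)|$.

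Next, I would apply Lemma~\ref{Mahler} to bound the discriminant: $|D(f)|\le n^nL(f)^{2n-2}$. Combining these two estimates gives
\[
p^{n-1} \le n^n L(f)^{2n-2},
\]
and taking $(n-1)$-th roots yields
\[
p \le n^{n/(n-1)} L(f)^{(2n-2)/(n-1)} = n^{n/(n-1)} L(f)^2.
\]
Therefore the shift $s$ produced by Lemma~\ref{first} satisfies $0\le s < p \le n^{n/(n-1)}L(f)^2$, which contradicts the hypothesis of the theorem and establishes the result.

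There is no serious obstacle: the only subtlety is justifying $D(f)\ne 0$ so that the divisibility $p^{n-1}\mid D(f)$ yields the numerical bound $p^{n-1}\le |D(f)|$, and this follows immediately from irreducibility. The rest is routine exponent arithmetic on the bound of Lemma~\ref{Mahler}.
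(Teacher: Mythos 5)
Your proof is correct and is essentially the argument the paper intends: the paper simply states the theorem as "combining Lemmas~\ref{first} and~\ref{Mahler}," and your write-up carries out exactly that combination, with the (welcome) extra remark that $D(f)\ne 0$ is needed to pass from $p^{n-1}\mid D(f)$ to $p^{n-1}\le |D(f)|$. The exponent arithmetic is right, so nothing further is needed.
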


We also remark that the shift $s$ which makes  $f(x+s)$  irreducible by Eisenstein with respect to prime $p$
satisfies   $f(s) \equiv 0 \pmod p$, which can further reduce the number of trials (however
a direct irreducibility testing via the classical algorithm of Lenstra, Lenstra and  Lov{\'a}sz~\cite{LLL} is still much more efficient).

\section{Density of shifted Eisenstein polynomials}

In this section we show that as polynomial height grows,
the density of polynomials that are irreducible by Eisenstein shifting is strictly larger
than the density of polynomials that are irreducible by Eisenstein.
We start by calculating a maximum height for $f(x)$ such that $f(x+1)$ is of height
at most $H$.

\begin{lem}\label{lessheight}
For $f\in \Z[x]$ of degree $n$, we denote $f_{+ 1} (x) = f(x + 1)$.
Then $H(f_{+ 1}) \le 2^n H(f)$.
\end{lem}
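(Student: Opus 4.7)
The plan is to expand $f(x+1)$ using the binomial theorem, read off its coefficients explicitly in terms of those of $f$, and then bound each one using the triangle inequality together with a uniform estimate on binomial coefficients.

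Writing $f(x) = \sum_{i=0}^n a_i x^i$ and applying $(x+1)^i = \sum_{j=0}^i \binom{i}{j} x^j$, I would rearrange the double sum to obtain
$$
f(x+1) = \sum_{j=0}^n b_j x^j, \qquad b_j = \sum_{i=j}^n \binom{i}{j} a_i.
$$
The triangle inequality then gives $|b_j| \le H(f) \sum_{i=j}^n \binom{i}{j}$, and I would invoke the hockey-stick identity $\sum_{i=j}^n \binom{i}{j} = \binom{n+1}{j+1}$ to collapse the inner sum to a single binomial coefficient, yielding $|b_j| \le H(f) \binom{n+1}{j+1}$.

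The remaining task is to verify the uniform estimate $\binom{n+1}{k} \le 2^n$ for every $0 \le k \le n+1$. I would prove this by induction on $n$, using Pascal's rule $\binom{n+1}{k} = \binom{n}{k-1} + \binom{n}{k}$ together with the inductive hypothesis $\binom{n}{j} \le 2^{n-1}$ for all $j$; the base case $n = 0$ is immediate. Combining these facts yields $|b_j| \le 2^n H(f)$ uniformly in $j$, and taking the maximum over $j$ gives $H(f_{+1}) \le 2^n H(f)$, as desired.

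There is no substantial obstacle: the entire argument is elementary bookkeeping, with the only mildly nontrivial ingredient being the observation that the hockey-stick identity reduces the coefficient bound to a single binomial coefficient, which then submits to the standard inductive estimate. One might alternatively derive the binomial bound directly from the fact that $\binom{n+1}{k}$ is at most half of $\sum_{k} \binom{n+1}{k} = 2^{n+1}$ once one argues by symmetry, but the induction seems cleanest.
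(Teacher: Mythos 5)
Your proof is correct and follows essentially the same route as the paper: expand $f(x+1)$ by the binomial theorem, bound each coefficient by the triangle inequality, and estimate the resulting sum of binomial coefficients by $2^n$. You merely supply more detail (the hockey-stick collapse to $\binom{n+1}{j+1}$ and the inductive bound $\binom{n+1}{k}\le 2^n$) where the paper simply asserts the final inequality.
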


\begin{proof}
Let $f(x)$ be of the form~\eqref{eq:poly}.
For $i =0, \ldots, n$, the absolute value of the coefficient
of $x^{n-i}$ in $f_{+ 1}$
can be estimated as
\begin{equation*}
\begin{split}
\sum_{0 \leq j \leq i}\binom{n-j}{i-j} \left|a_{n-j}\right|\leq   2^n H(f),
\end{split}
\end{equation*}
as required.
\end{proof}

We also need the number of polynomials, of given degree and maximum height,
 that are irreducible by Eisenstein.
Let
\begin{equation}
\label{eq:rhon}
\rho_n =  1-\prod_{p}
\(1- \frac{(p-1)^2}{p^{n+2}}\).
\end{equation}
In~\cite{Hey} we prove the following result.

\begin{lemma}
\label{lem:rho}
We have,
$$
\#\cE_n(H)=\rho_n 2^{n+1} H^{n+1}+\left\{\begin{array}{ll}
O\(H^{n}\),&\quad \text{if $n>2$}, \\
O(H^2(\log H)^2),&
\quad  \text{if $n=2$}.
\end{array}\right.
$$
\end{lemma}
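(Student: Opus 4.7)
The plan is to evaluate $\#\cE_n(H)$ by inclusion--exclusion over the primes witnessing Eisenstein's criterion. Observe first that condition~(ii) forces $a_0 \ne 0$, and combined with~(i) gives $p \le |a_0| \le H$, so only primes $p \le H$ can contribute and all sums below are finite. For each squarefree integer $m \ge 1$, let $N_m(H)$ denote the number of $f$ of degree $n$ with $H(f) \le H$ that simultaneously satisfy the Eisenstein conditions (i)--(iii) for every prime $p \mid m$; standard inclusion--exclusion then gives
$$
\#\cE_n(H) = -\sum_{\substack{m \ge 2\\ m \text{ squarefree}}} \mu(m)\, N_m(H).
$$

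Next I would evaluate $N_m(H)$ by separating the conditions coefficient by coefficient. Since the constraints on the independent coefficients $a_0, a_n, a_1, \dots, a_{n-1}$ decouple, a short M\"obius inversion on each range yields
\begin{align*}
\#\{a_0 : |a_0|\le H,\, m\mid a_0,\, p^2\nmid a_0\ \forall\, p\mid m\} &= \frac{2H\varphi(m)}{m^2} + O(2^{\omega(m)}),\\
\#\{a_n : |a_n|\le H,\, \gcd(a_n,m)=1,\, a_n\ne 0\} &= \frac{2H\varphi(m)}{m} + O(2^{\omega(m)}),\\
\#\{a_i : |a_i|\le H,\, m\mid a_i\} &= \frac{2H}{m} + O(1),
\end{align*}
so $N_m(H) = (2H)^{n+1}\varphi(m)^2/m^{n+2} + E_m(H)$ for an explicit error $E_m(H)$. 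Substituting the main terms into the inclusion--exclusion sum and recognising the Euler product
$$
\sum_{\substack{m \ge 1\\ m \text{ squarefree}}} \mu(m)\frac{\varphi(m)^2}{m^{n+2}} = \prod_p\!\left(1-\frac{(p-1)^2}{p^{n+2}}\right)
$$
reproduces the main term $\rho_n\, 2^{n+1} H^{n+1}$ with $\rho_n$ as in~\eqref{eq:rhon}.

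It remains to control $\sum_m |\mu(m)\, E_m(H)|$. Expanding the triple product defining $N_m(H)$ and replacing one factor at a time by its remainder, the dominant cross terms have the shape $(2H)^n\, 2^{\omega(m)}\varphi(m)/m^n$. For $n \ge 3$ the resulting series is dominated by $\sum_m 2^{\omega(m)}/m^{n-1}$, which converges absolutely by comparison with the Euler product $\prod_p(1+2/p^{n-1})$, and the total error is $O(H^n)$. For $n = 2$ the corresponding series diverges, and I would invoke the sharp estimate $\sum_{m\le H} 2^{\omega(m)}/m \asymp (\log H)^2$, which together with the vanishing $N_m(H)=0$ whenever any prime factor of $m$ exceeds $H$ yields total error $O(H^2(\log H)^2)$. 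The main obstacle will be this borderline case $n=2$: a careless bound on the cross terms arising from the $a_0$ and $a_n$ remainders either loses a factor of $\log H$ or gains an extra one, so one must be careful both about identifying which cross term dominates and about applying the sharp asymptotic for $\sum_{m\le H}2^{\omega(m)}/m$ rather than the crude bound $(\log H)^{O(1)}$.
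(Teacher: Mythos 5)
Your proposal is correct and follows essentially the same route as the source: the paper does not reprove this lemma but imports it from~\cite{Hey}, and the argument there is exactly your inclusion--exclusion over squarefree moduli, with your $N_m(H)$ being the quantity $\#\cH_n(d,H)$ whose asymptotic~\eqref{eq:Hvarphi} (same main term $(2H)^{n+1}\varphi(d)^2/d^{n+2}$ and same dominant error term $H^{n}2^{\omega(d)}/d^{n-1}$) this paper quotes later in Section~5. Your treatment of the borderline case $n=2$ via the truncation $m\le H$ and the bound $\sum_{m\le H}2^{\omega(m)}/m\ll(\log H)^2$ is also the correct explanation of the extra $(\log H)^2$ in the error term.
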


We also require the following two simple statements.

\begin{lem}
\label{lem: pp}
Suppose that $f(x)$ is irreducible by Eisenstein  with respect to prime $p$. Then $f(x+1)$ is not irreducible by Eisenstein  with respect to  $p$.
\end{lem}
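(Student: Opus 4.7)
The plan is to examine the constant term of $f(x+1)$ and show that it fails the divisibility condition required by Eisenstein's criterion at $p$.

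Write $f$ in the form~\eqref{eq:poly}. The constant term of $f(x+1)$ is simply $f(1) = a_n + a_{n-1} + \ldots + a_1 + a_0$. By the Eisenstein hypothesis at $p$, we have $p \mid a_i$ for every $i = 0, \ldots, n-1$, while $p \nmid a_n$. Reducing the sum modulo $p$ therefore yields
$$
f(1) \equiv a_n \not\equiv 0 \pmod{p}.
$$

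For $f(x+1)$ to be irreducible by Eisenstein with respect to $p$, condition (i) would demand $p \mid f(1)$ (since $f(1)$ is the constant coefficient of $f(x+1)$). As this fails, $f(x+1)$ is not Eisenstein at $p$, completing the argument.

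The only potential obstacle is a careful bookkeeping of which condition of Eisenstein's criterion is violated; here it is the divisibility of the constant term, which is immediate and requires no further computation.
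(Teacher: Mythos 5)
Your argument is correct and is essentially identical to the paper's proof: both observe that the constant term of $f(x+1)$ equals $f(1)=a_n+a_{n-1}+\cdots+a_0\equiv a_n\not\equiv 0\pmod p$, so condition (i) of the Eisenstein criterion fails at $p$. No issues.
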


\begin{proof}
Let
$$f(x)=\sum_{i=0}^n a_ix^i \in \cE_n
$$
be irreducible by Eisenstein with respect to prime $p$. The coefficient of $x^{0}$ in $f(x+1)$ is $a_n+a_{n-1}+\ldots+a_1+a_0$, which is clearly not divisible by $p$. So $f(x+1)$ is not irreducible by Eisenstein with respect to $p$.
\end{proof}

Let
\begin{equation}
\label{eq:taun}
\tau_n = \(\sum_p \frac{(p-1)^2}{p^{n+2}}\)^2 - \sum_{p}\frac{(p-1)^4}{p^{2n+4}}
\end{equation}

\begin{lem}
\label{lem:tau}
Let
$$\cF_n(H)=\{f(x) \in \cE_n(H)~:~f(x+1) \in \cE_n\}.$$ Then for $n \geq 2$,
 $$\#\cF_n(H)\leq \(\tau_n +o(1)\) (2H)^{n+1}.
 $$
\end{lem}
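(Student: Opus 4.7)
The plan is to bound $\#\cF_n(H)$ by grouping polynomials according to the primes witnessing the two Eisenstein conditions, then evaluating the resulting sum via a Chinese Remainder Theorem (CRT) density computation that closely parallels the proof of Lemma~\ref{lem:rho}.

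For primes $p$ and $q$, let $\cF_n(H; p, q)$ denote the set of $f \in \cE_n(H)$ that are irreducible by Eisenstein with respect to $p$ and such that $f(x+1)$ is irreducible by Eisenstein with respect to $q$. By Lemma~\ref{lem: pp}, every $f \in \cF_n(H)$ lies in some $\cF_n(H; p, q)$ with $p \ne q$, so the union bound gives
$$\#\cF_n(H) \le \sum_{p \ne q} \#\cF_n(H; p, q).$$
To estimate each summand, observe that the $p$-Eisenstein condition is a condition on the residues of $(a_0,\dots,a_n)$ modulo $p^2$, cutting out $(p-1)^2 p^n$ out of $p^{2n+2}$ residue tuples (density $(p-1)^2/p^{n+2}$). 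The $q$-Eisenstein condition on $f(x+1)$ is a condition on $(b_0,\dots,b_n)=T(a_0,\dots,a_n)$, where $b_j=\sum_{i\ge j}\binom{i}{j}a_i$; since $T$ is upper triangular with $1$'s on the diagonal, it is $\Z$-unimodular and therefore permutes $(\Z/q^2\Z)^{n+1}$, so the pulled-back condition also consists of $(q-1)^2 q^n$ residue classes modulo $q^2$. With $p\ne q$, CRT makes the two conditions independent modulo $p^2q^2$, yielding a joint density of $(p-1)^2(q-1)^2/(p^{n+2}q^{n+2})$. A standard lattice-point count then gives
$$\#\cF_n(H;p,q) \le \frac{(p-1)^2 (q-1)^2}{p^{n+2} q^{n+2}} (2H+1)^{n+1} + O\(\frac{(p-1)^2(q-1)^2}{p^n q^n} H^n\).$$

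Summing the main terms over distinct pairs produces exactly
$$\sum_{p \ne q}\frac{(p-1)^2 (q-1)^2}{p^{n+2} q^{n+2}} = \(\sum_p \frac{(p-1)^2}{p^{n+2}}\)^2 - \sum_p \frac{(p-1)^4}{p^{2n+4}}= \tau_n,$$
yielding a main contribution of $\tau_n(2H+1)^{n+1} = (\tau_n+o(1))(2H)^{n+1}$, as required. The main obstacle is absorbing the error terms into $o((2H)^{n+1})$ uniformly in $p$ and $q$: one first truncates the outer sum, noting that a polynomial of height $H$ cannot be Eisenstein with respect to primes exceeding roughly $H$ (the Eisenstein conditions would force too many coefficients to vanish), and then uses convergence of $\sum_p (p-1)^2 / p^n$ for $n \ge 3$ to control the error over the remaining finite range. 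A slightly more delicate truncation handles the borderline case $n=2$, but the main term is the same.
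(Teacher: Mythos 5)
Your identification of the main term is correct and is in substance the same computation the paper performs: the paper fixes the coefficients $a_n, a_{n-1}, \ldots, a_0$ one at a time and counts residues modulo $p$, $q$, $p^2$, $q^2$, which is exactly your CRT/unimodularity argument in different clothing, and it arrives at the same per-pair density $(p-1)^2(q-1)^2/(p^{n+2}q^{n+2})$ and the same identity $\sum_{p\ne q} = \tau_n$. The gap is in the error analysis, and it is not cosmetic. First, the per-pair bound
$\#\cF_n(H;p,q) \le \frac{(p-1)^2(q-1)^2}{p^{n+2}q^{n+2}}(2H+1)^{n+1} + O\bigl((p-1)^2(q-1)^2 H^n/(p^nq^n)\bigr)$
records only the leading correction of the lattice-point count; the cross terms in which several of the $O(1)$'s are taken contribute quantities such as $O\bigl((pq)^2\bigr)$ that are not dominated by your stated error once $pq \gg H$, and your truncation only confines $p$ and $q$ separately to $O(H)$, so $pq$ can be as large as $H^2$. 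Second, even granting your error term, summing it over the roughly $(H/\log H)^2$ admissible pairs gives $O(H^{n}\cdot(\sum_{p\le CH}(p-1)^2p^{-n})^2)$, which is $o(H^{n+1})$ for $n\ge 3$ but of order $H^4/(\log H)^2$ for $n=2$ --- larger than the main term $H^3$. Since the lemma is asserted (and used in Theorem~\ref{thm:E/E}) for all $n\ge 2$, "a slightly more delicate truncation handles the borderline case $n=2$" is precisely the point that cannot be waved away.

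The paper closes this with one extra idea worth knowing. Discarding the $O(H^n)$ polynomials with $(n-1)a_{n-1}+2a_{n-2}=0$, it observes that $2A_{n-2}-(n-1)A_{n-1}=(n-1)a_{n-1}+2a_{n-2}$ is divisible by $q$ (since $q\mid A_{n-1},A_{n-2}$) and by $p$ (since $p\mid a_{n-1},a_{n-2}$); being a nonzero integer of absolute value at most $(n+1)H$, this forces $pq\le (n+1)H$. The number of such prime pairs is $O(H\log\log H/\log H)=o(H)$, so even the crude per-pair error $O(H^n)$ sums to $o(H^{n+1})$ uniformly for all $n\ge 2$. Your route can also be repaired without this trick --- split at $pq\le 2H+1$, where the leading correction is $O(H^n)$ per pair and the pair count is $O(H\log\log H/\log H)$, and note that for $pq>2H+1$ each of $a_0,\ldots,a_{n-1}$ is pinned to a single residue modulo $pq$ so the count is $O(H)$ per pair --- but as written the proposal does not contain the argument that makes the error terms go away for $n=2$.
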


\begin{proof}
Fix some sufficiently large $H$ and let
$$f(x)=\sum_{i=0}^na_ix^i\in \cE_n(H).$$
Consequently, $$f(x+1)=\sum_{i=0}^nA_ix^i,$$ with
$A_i=a_i+L_i(a_n,a_{n-1},\ldots,a_{i+1})$
where $L_i(a_n,a_{n-1},\ldots,a_{i+1})$ is a linear form in
$a_n,a_{n-1},\ldots,a_{i+1}$ for $i=0, \ldots, n$.
In particular,
$$
A_n = a_n, \quad A_{n-1} = na_n + a_{n-1}, \quad
A_{n-2} = \frac{n(n-1)}{2}a_n + (n-1) a_{n-1} + a_{n-2}.
$$

Clearly there are at most $O(H^n)$ polynomials $f\in \cI_n(H)$ for which the condition
\begin{equation}
\label{eq:top A}
2 A_{n-2} - (n-1) A_{n-1}  =(n-1) a_{n-1} + 2a_{n-2} \ne 0.
\end{equation}
is violated.
Thus
\begin{equation}
\label{eq:B B}
\#\cF_n(H) = \#\cF_n^*(H) +  O(H^n),
\end{equation}
where $\cF_n^*(H)$ is the set of polynomials $f\in \cF_n(H)$ for which~\eqref{eq:top A}
holds.

Now, given two primes $p$ and $q$, we calculate an upper bound on the number
$N_n(H,p,q)$ of   $f \in \cF_n^*(H)$ such that
\begin{itemize}
\item $f(x)$ is irreducible by Eisenstein  with respect to prime $p$;
\item $f(x+1)$ is irreducible by Eisenstein  with respect to prime $q$.
\end{itemize}

We see from Lemma~\ref{lem: pp} that $N_n(H,p,q) = 0$ if $p=q$.
So we now always assume that $p\ne q$.

To do so we estimate (inductively over $i=n, n-1, \ldots, 0$) the number of
possibilities for the coefficient $a_i$ of $f$, provided that higher
coefficients $a_n, \ldots, a_{i+1}$ are already fixed.

\begin{itemize}

  \item Possible values of $a_n$: We know that $a_n \not \equiv 0 \pmod p$ and $a_n \not \equiv 0 \pmod q$. Therefore we conclude that the number of possible values of $a_n$ is
 $2H(p-1)(q-1)/pq + O(1)$.

 \item Possible values of $a_i$, $1 \le i < n$: Fix  arbitrary
 $a_n,a_{n-1},\ldots,a_{i+1}$.
 The relations
 $$
 a_i\equiv 0 \pmod p
\quad
\text{and}
\quad
 A_i = a_i+L_i(a_n,a_{n-1},\ldots,a_{i+1}) \equiv 0 \pmod q
 $$
put $a_i$ in a unique residue class modulo $pq$.
It follows that the number of possible values of $a_i$ for $ i=n-1,n-2,\ldots,1$ cannot exceed $2H/pq +O(1)$.

 \item Possible values of $a_0$: We argue as before but also note that for $a_0$ we have the additional constraints that
  $A_0 \not \equiv 0 \pmod {p^2}$, $a_0 \not \equiv 0 \pmod {q^2}$
 and so $a_0$ can take at most $2H(q-1)(p-1)/p^2q^2 +O(1)$ values.
\end{itemize}

So, for primes $p$ and $q$ we have
\begin{equation*}
\begin{split}
N_n(H,p,q) &\le \(\frac{2H(p-1)(q-1)}{pq}+O(1)\)\(\frac{2H}{pq} +O(1)\)^{n-1}\\
&\qquad \qquad \qquad \qquad \qquad \qquad
\(\frac{2H(p-1)(q-1)}{p^2q^2} +O(1)\)\\
&= \frac{2^{n+1}H^{n+1}(p-1)^2(q-1)^2}{p^{n+2}q^{n+2}} + O(H^n) .
\end{split}
\end{equation*}
We also see from~\eqref{eq:top A} that if  $pq > (n+1) H$ then
$N_n(H,p,q)=0$.
Hence
\begin{equation*}
\begin{split}
\#\cF_n^*(H) &\le \sum_{\substack{p \neq q\\ pq \le (n+1) H}}
 \(\frac{2^{n+1}H^{n+1}(p-1)^2(q-1)^2}{p^{n+2}q^{n+2}} + O(H^n) \)\\
&\le (2H)^{n+1}\sum_{\substack{p \neq q\\ pq \le (n+1) H}}\(\frac{(p-1)^2(q-1)^2}{p^{n+2}q^{n+2}}\) + O\(\frac{H^{n+1}\log \log H}{\log H}\) ,
 \end{split}
\end{equation*}
as there are $O(Q (\log Q)^{-1} \log \log Q)$ products of two distinct primes $pq \le Q$,
see~\cite[Chapter~II.6, Theorem~4]{Ten}.
Therefore,
$$
\#\cF_n^*(H) \le (2H)^{n+1} \sum_{\substack{p \neq q\\ pq \le (n+1) H}}
\frac{(p-1)^2(q-1)^2}{p^{n+2}q^{n+2}}  +o(H^{n+1}),
$$
Since the above series converges, we derive
\begin{equation*}
\begin{split}
\#\cF_n^*(H)
&\le (2H)^{n+1}\sum_{p \neq q}
 \frac{(p-1)^2(q-1)^2}{p^{n+2}q^{n+2}}+o(H^{n+1}) \\
&=(2H)^{n+1}\left(\sum_{p,q}\frac{(p-1)^2(q-1)^2}{p^{n+2}q^{n+2}}-\sum_{p}\frac{(p-1)^4}{p^{2n+4}}\right)+o(H^{n+1}),
\end{split}
\end{equation*}
which concludes the proof.
\end{proof}

We can now prove the main result of this section.
We recall that $\rho_n$ and $\tau_n$ are defined by~\eqref{eq:rhon} and~\eqref{eq:taun},
respectively.

\begin{thm}
\label{thm:E/E}
For $n \geq 2$ we have
$$\liminf_{H \to \infty} \frac{\#\overline{\cE}_n(H)}{\#\cE_n(H)}\ge 1+\gamma_n,$$
where
$$
\gamma_n = \frac{1}{2^{n^2+n}} \(1 -\frac{\tau_n}{\rho_n}\) > 0.
$$
\end{thm}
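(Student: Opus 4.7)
The plan is to exhibit, beyond the polynomials already in $\cE_n(H)$, an auxiliary family of shifted Eisenstein polynomials of height at most $H$ that is disjoint from $\cE_n(H)$ and whose cardinality is of order $\#\cE_n(H/2^n)$, controlled via a sign-reversed version of Lemma~\ref{lem:tau}.

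First I would attach to each $f \in \cE_n(H/2^n)$ the polynomial $g(x) = f(x-1)$. The proof of Lemma~\ref{lessheight} uses only absolute values of the binomial expansion, so it applies verbatim to the shift $-1$ and yields $H(g) \le 2^n H(f) \le H$. Since $g(x+1) = f(x) \in \cE_n$ exhibits $g$ as shifted Eisenstein, $g \in \overline{\cE}_n(H)$. The assignment $f \mapsto g$ is plainly injective, so after discarding those $g$ that happen to lie in $\cE_n$ one is left with at least
$$
\#\cE_n(H/2^n) - \#\{f \in \cE_n(H/2^n) : f(x-1) \in \cE_n\}
$$
polynomials in $\overline{\cE}_n(H) \setminus \cE_n(H)$. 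The subtracted term is bounded by the shift-$(-1)$ analogue of Lemma~\ref{lem:tau}, whose proof is entirely sign-symmetric: replacing $f(x+1)$ by $f(x-1)$ only flips signs inside the linear forms $L_i$, while every divisibility and counting step survives unchanged, yielding the bound $(\tau_n + o(1))(2H/2^n)^{n+1}$.

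Applying Lemma~\ref{lem:rho} to $\cE_n(H/2^n)$ then gives
$$
\#\overline{\cE}_n(H) - \#\cE_n(H) \ge (\rho_n - \tau_n + o(1)) \cdot 2^{n+1}(H/2^n)^{n+1}.
$$
Dividing by $\#\cE_n(H) = (\rho_n + o(1)) \cdot 2^{n+1} H^{n+1}$ and using $n(n+1) = n^2+n$ yields the liminf bound $1 + \gamma_n$. The one non-cosmetic obstacle is the strict positivity $\rho_n > \tau_n$. Setting $x_p = (p-1)^2/p^{n+2}$, so that $\rho_n = 1 - \prod_p(1-x_p)$ and $\tau_n = 2\sum_{p<q} x_p x_q$, the truncation $\prod_p(1-x_p) \le 1 - \sum_p x_p + \sum_{p<q} x_p x_q$ reduces this to $\sum_p x_p > 3\sum_{p<q} x_p x_q$, which follows from $\sum_p x_p \le \sum_p 1/p^2 < 2/3$ valid for all $n \ge 2$.
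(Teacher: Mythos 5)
Your argument is essentially the paper's own: you inject $\cE_n(H/2^n)$ into $\overline{\cE}_n(H)$ by a unit shift (using $-1$ where the paper uses $+1$, so you need the sign-flipped analogue of Lemma~\ref{lem:tau}, which does hold since the key combination becomes $2A_{n-2}+(n-1)A_{n-1}=-(n-1)a_{n-1}+2a_{n-2}$ and still eliminates $a_n$), and then combine Lemmas~\ref{lessheight}, \ref{lem:rho} and~\ref{lem:tau} exactly as the paper does. Your positivity proof of $\rho_n-\tau_n>0$ via $\prod_p(1-x_p)\le 1-e_1+e_2$ together with $e_2\le e_1^2/2$ is a slightly cleaner variant of the paper's series manipulation, but the overall route is the same.
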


\begin{proof}
We see from Lemma~\ref{lessheight} that for $h = H/2^n$ we have
$$
\cE_n(H) \bigcup \(\cE_n(h)  \setminus\cF_n(h)\)\subseteq \overline{\cE}_n(H),
$$
where $\cF_n(h)$ is defined as in Lemma~\ref{lem:tau}.
Therefore, since $\cF_n(h) \subseteq \cE_n(h)$, we have
$$
\#\overline{\cE}_n(H)  \ge \# \cE_n(H) +  \# \cE_n(h) - \# \cF_n(h).
$$
Recalling Lemmas~\ref{lem:rho} and~\ref{lem:tau} we derive the desired inequality.

It now remains to show that $\gamma_n >0$.
So it suffices to show that
$$\rho_n-\tau_n >0.$$
From~\eqref{eq:rhon} and ~\eqref{eq:taun} we have
\begin{equation*}
\begin{split}
\rho_n-\tau_n & = 1-\prod_{p}\(1- \frac{(p-1)^2}{p^{n+2}}\) - \(\sum_p \frac{(p-1)^2}{p^{n+2}}\)^2 + \sum_{p}\frac{(p-1)^4}{p^{2n+4}}\\
&\ge 1-\prod_{p}\(1- \frac{(p-1)^2}{p^{n+2}}\) - \(\sum_p \frac{(p-1)^2}{p^{n+2}}\)^2\\
& =\sum_{k=1}^\infty (-1)^{k+1}\sum_{p_1< \ldots < p_k} \prod_{j=1}^k\frac{(p_{j}-1)^2}
{p_{j}^{n+2}}-\(\sum_p \frac{(p-1)^2}{p^{n+2}}\)^2.
 \end{split}
\end{equation*}
Discarding from the first sum all positive terms (corresponding to odd $k$)
except for the first one, we obtain
\begin{equation*}
\begin{split}
\rho_n-\tau_n  &  \ge \sum_p \frac{(p-1)^2}{p^{n+2}} - \sum_{k=1}^\infty \
\sum_{p_1< \ldots < p_{2k}} \prod_{j=1}^{2k}\frac{(p_{j}-1)^2}
{p_{j}^{n+2}}-\(\sum_p \frac{(p-1)^2}{p^{n+2}}\)^2\\
 &  \ge \sum_p \frac{(p-1)^2}{p^{n+2}} - \sum_{k=1}^\infty \frac{1}{(2k)!}
\(\sum_{p} \frac{(p-1)^2}
{p^{n+2}}\)^{2k}-\(\sum_p \frac{(p-1)^2}{p^{n+2}}\)^2 \\
&  \ge \sum_p \frac{(p-1)^2}{p^{n+2}} - \sum_{k=1}^\infty
\(\sum_{p} \frac{(p-1)^2}
{p^{n+2}}\)^{2k}-\(\sum_p \frac{(p-1)^2}{p^{n+2}}\)^2 .
 \end{split}
\end{equation*}

Hence, denoting
$$
P_n = \sum_p \frac{(p-1)^2}{p^{n+2}},
$$
we derive
$$\rho_n-\tau_n \ge P_n- \frac{P_n^2}{1+P_n^2} -P_n^2.$$
Since
$$
P_n \le P_2 \le 0.18,
$$
the result now follows.
\end{proof}

It is certainly easy to get an explicit lower bound on $\gamma_n$
in Theorem~\ref{thm:E/E}. Various values of $\gamma_n$ using the first 10,000 primes are
given in Table~\ref{tab:gamma}.
\begin{table}[ht]
 \caption{Approximations to  $\gamma_n$ for some $n$}
 \label{tab:gamma}
\begin{center}
 \begin{tabular}{ | l | l |}
    \hline
    \textrm{$n$} &$\gamma_n$\\ \hline
    $2$ & $1.33 \times 10^{-2}$\\ \hline
    $3$ & $2.36\times10^{-4}$ \\ \hline
    $4$&$9.44\times10^{-7}$  \\ \hline
    $5$&$9.28\times10^{-10}$ \\ \hline
    $10$&$7.70\times10^{-34}$ \\ \hline
    \end{tabular}
  \end{center}
 \end{table}

\begin{question}
\label{quest:E-shift}
Obtain tight bounds or the exact values of
$$\liminf_{H \to \infty} \frac{\#\overline{\cE}_n(H)}{(2H)^{n+1}}
\mand \limsup_{H \to \infty} \frac{\#\overline{\cE}_n(H)}{(2H)^{n+1}}$$
(they most likely coincide).
\end{question}

\section{Infinitude of $\cI_n \setminus \csE_n$}
\label{sec:sf discr}

We note that a consequence of Lemma~\ref{first} is that any polynomial belongs to $\cI_n \setminus \csE_n$ if its discriminant is $n-1$ free. Hence we would expect the size of $\cI_n \setminus \csE_n$ to be ``massive''. In fact, for a fixed degree greater than or equal to 2, we can prove that the number of irreducible polynomials that are not shifted Eisenstein polynomials is infinite.

\begin{thm}
The set $\cI_n \setminus \csE_n$ is  infinite for all $n \geq 2$.
\end{thm}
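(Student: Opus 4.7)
The plan is to use Lemma~\ref{first} contrapositively: if the discriminant $D(f)$ of an irreducible $f \in \cI_n$ is not divisible by $p^{n-1}$ for any prime $p$ (let us call such a $D(f)$ \emph{$(n-1)$-free}), then $f \in \cI_n \setminus \csE_n$. A key observation that drastically simplifies the task is that $\cI_n \setminus \csE_n$ is closed under the substitution $x \mapsto x+k$ for every $k \in \Z$: irreducibility is shift invariant, and so is the property of being a shifted Eisenstein polynomial, since ``some translate of $f$ is Eisenstein'' is manifestly symmetric in $f$ and $f(x+k)$. The shifts $\{f(x+k):k \in \Z\}$ of any fixed $f$ are pairwise distinct elements of $\Z[x]$, so the theorem will follow once a single element of $\cI_n \setminus \csE_n$ is produced for each $n \geq 2$.

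For $n \geq 3$, I would produce such an element by exhibiting an irreducible polynomial with squarefree (and hence $(n-1)$-free) discriminant. For low degrees, concrete trinomials suffice: $x^3 + x - 1$ has discriminant $-31$ and $x^4 + x - 1$ has discriminant $-283$, both prime, with irreducibility immediate in either case. For a treatment that works uniformly in $n$, I would consider a one-parameter family such as $f_k(x) = x^n + kx - 1$; here $D(f_k)$ is a non-constant polynomial in $k$, and combining reduction modulo a small prime to secure irreducibility for infinitely many $k$ with a standard squarefree-values-of-polynomials input yields the required infinite supply of examples directly (and then the shift trick is not even needed).

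For $n = 2$ the discriminant criterion collapses---requiring $D(f)$ to be $1$-free forces $|D(f)| = 1$, and such quadratics are automatically reducible over $\Z$---so I would argue by direct calculation on $f_c(x) = x^2 + x + c$ for $c \geq 1$. Since $1 - 4c < 0$, each $f_c$ is irreducible. Writing $f_c(x+s) = x^2 + (2s+1)x + (s^2 + s + c)$ and using $4(s^2+s+c) = (2s+1)^2 + (4c-1)$, one checks that $p=2$ never serves as an Eisenstein prime (the middle coefficient is odd) while an odd prime $p$ does so for some shift if and only if $p \,\|\, 4c-1$. Therefore $f_c \notin \csE_2$ exactly when $4c-1$ is a \emph{powerful} integer. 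Since $3^{2k+1} \equiv 3 \pmod{4}$, the integers $c_k = (3^{2k+1}+1)/4$ with $k \geq 1$ yield $4c_k - 1 = 3^{2k+1}$, which is powerful, providing an explicit infinite family in $\cI_2 \setminus \csE_2$.

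The main obstacle is the construction for general $n \geq 3$: once a single irreducible polynomial per degree with $(n-1)$-free discriminant is in hand the shift trick finishes the job, but discriminants grow rapidly with $n$ and their prime factorizations are delicate; the cleanest route seems to be through a one-parameter family combined with an existence statement for squarefree values of polynomials, with low-degree cases handled by direct verification if needed.
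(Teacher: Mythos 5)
Your shift--closure reduction is correct and elegant: irreducibility and membership in $\csE_n$ are both invariant under $x\mapsto x+k$, the translates $f(x+k)$ of a fixed nonconstant $f$ are pairwise distinct, and so a single element of $\cI_n\setminus\csE_n$ per degree already forces infinitude. Your degree~$2$ analysis (reducing to $4c-1$ being powerful, realized by $4c-1=3^{2k+1}$) and the explicit examples $x^3+x-1$ and $x^4+x-1$ with prime discriminants are complete, so $n\in\{2,3,4\}$ is fully settled by your argument. This is a genuinely different route from the paper, which avoids discriminants entirely and works uniformly in $n$ with the single trinomial $f(x)=x^n+x+2$: irreducibility is quoted from Osada, $f$ is not Eisenstein because the coefficient of $x$ equals $1$, and if $f(x+s)$ were Eisenstein with respect to $q$ then $q\mid ns$ (coefficient of $x^{n-1}$); the case $q\mid s$ reduces, via the periodicity observation in Lemma~\ref{first}, to $f$ itself being Eisenstein, while $q\mid n$ contradicts $q\mid ns^{n-1}+1$ (coefficient of $x$). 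That argument is purely elementary and needs no analytic input.

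The genuine gap is your construction for general $n\ge 5$. The discriminant of $f_k(x)=x^n+kx-1$ is, up to sign and a constant, of the form $(n-1)^{n-1}k^n+n^n$, a polynomial of degree $n$ in $k$. Producing infinitely many squarefree values of a univariate integer polynomial of degree $\ge 4$ is a well-known open problem unconditionally (it is known only under the $ABC$-conjecture, by Granville), and the paper itself flags exactly this obstruction, via Poonen's conditional results, when discussing the density version of the statement. So the ``standard squarefree-values-of-polynomials input'' you invoke does not exist. What you actually need is weaker, namely $(n-1)$-free values of a degree-$n$ polynomial, which sits precisely at the boundary of the Erd\H{o}s--Hooley theory of $l$-free values (requiring $l\ge \deg-1$); but to use that you would still have to control the factorization of the discriminant polynomial in $k$, rule out fixed $(n-1)$-th power divisors, and make the sieve compatible with the congruence conditions you impose to force irreducibility of $f_k$. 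None of this is routine, and none of it is carried out. As written, your proof establishes the theorem only for $n\in\{2,3,4\}$; for all larger $n$ you still need, for each degree, at least one irreducible polynomial with $(n-1)$-free discriminant (or a direct argument in the style of the paper's trinomial).
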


\begin{proof}
Let $f(x)=x^n+x+p$ for some $n \ge 2$ and even prime $p$. Then $f$ is irreducible
(see~\cite[Lemma~9]{Osa}). Since no prime can divide the coefficient of $x$ it follows that $f$ is not an Eisenstein polynomial.

We show that $f$ cannot be an Eisenstein shift polynomial. Suppose this is not the case. Then for some integer $s$ the polynomial $f(x+s)$ is an Eisenstein polynomial with respect to some prime $q$.
We have
$$f(x+s)=x^n+nsx^{n-1}+ \ldots+ (ns^{n-1}+1)x+s^n+s+p,$$ and so
$ns \equiv 0 \pmod q$.
If $s \equiv 0 \pmod q$, then as previously explained in the proof of Lemma~\ref{first}, $f(x+s+kq)$ is an Eisenstein polynomial for any integer $k$. Since $f$ is not an Eisenstein polynomial it
follows that $s \not \equiv 0 \pmod q$.
So $n \equiv 0 \pmod q$. But then $ns^{n-1}+1 \equiv 0 \pmod q$; a contradiction.

So we conclude that for any $n \ge 2$ the infinite set
$$\{f(x)=x^n+x+p~:~p~ \textrm{an even prime}\}$$ consists of irreducible polynomials that are not shifted Eisenstein polynomials.
\end{proof}

We  also expect that
$$\lim_{H \to \infty}\frac{\#\(\cI_n \setminus\csE_n\) }{\#\cI_n}>0.$$
For example, it is natural to expect that there is a positive proportion
of polynomials $\cI_n$ with a square-free discriminant, which
by Lemma~\ref{first} puts them in the set $\cI_n \setminus\csE_n$.
However, even the conditional (under the $ABC$-conjecture) results
of Poonen~\cite{Poon} about square-free values of multivariate
polynomials are not sufficient to make this claim.

We can however prove an inferior result, for degrees greater than 2, involving height constrained polynomials that can be shifted to a height constrained Eisenstein polynomial.

\begin{thm}

\label{thm:CnH}
Let $$\csC_n(H)=\{f(x) \in \csE_n(H)~:~f(x+s) \in \cE_n(H)~\text{for some}~s\in \Z\}.$$
Then for $n>2$,
$$\lim_{H \to \infty} \frac{\#\csC_n(H)}{2H(2H+1)^n}<1.$$
\end{thm}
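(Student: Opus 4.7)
My plan is to use Lemma~\ref{first} to reduce the bound on $\#\csC_n(H)$ to the density of polynomials whose discriminant is divisible by a prime $(n-1)$-th power, then to estimate this density by a sieve argument which converges for $n>2$.

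Since $\csC_n(H)\subseteq\csE_n(H)$, Lemma~\ref{first} guarantees that every $f\in\csC_n(H)$ satisfies $p^{n-1}\mid D(f)$ for some prime $p$. By Mahler's bound (Lemma~\ref{Mahler}), $|D(f)|\le n^n((n+1)H)^{2n-2}$, so the relevant primes must satisfy $p\le C_nH^2$ (the $O(H^n)$ polynomials with $D(f)=0$ being a negligible exceptional set). For each such prime $p$ I would bound
$$N_p(H):=\#\{f\in\Z[x]_n \ \text{with}\ H(f)\le H \ \text{and}\ p^{n-1}\mid D(f)\}$$
by a residue-class count: $D=D(a_0,\ldots,a_n)$ is a nonzero polynomial modulo $p$ whenever $p\nmid n$ (its coefficient of $a_0^{n-1}$ equals $\pm n^n a_n^{n-1}$), so the locus $D\equiv 0$ cuts out a hypersurface of density $\sim 1/p^{n-1}$ in $(\Z/p^{n-1}\Z)^{n+1}$, and a lattice-point count yields $N_p(H)\le C_n(2H+1)^{n+1}/p^{n-1}+O_n(H^n)$ uniformly in $p$.

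The cleanest finish is an inclusion-exclusion over the events $\{p^{n-1}\mid D(f)\}$: the resulting local densities $\delta_p\le C_n/p^{n-1}$ give
$$\limsup_{H\to\infty}\frac{\#\csC_n(H)}{2H(2H+1)^n}\le 1-\prod_p(1-\delta_p),$$
and for $n\ge 3$ the product converges to a strictly positive number (using $\delta_p\approx p^{-(n-1)}$, to $\zeta(n-1)^{-1}>0$, giving an upper bound of about $1-6/\pi^2\approx 0.392$ at $n=3$ and decreasing rapidly thereafter). The hypothesis $n>2$ is essential: for $n=2$ one has $\sum_p p^{-1}=\infty$, the corresponding product collapses to $0$, and the argument breaks down completely. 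Even a crude union bound suffices for $n\ge 3$, since $\sum_p p^{-(n-1)}$ is already $\approx 0.452$ at $n=3$ and is bounded away from $1/C_n$ for modest $C_n$.

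The main obstacle is the sieve step in the second paragraph: establishing the uniform local density bound $\delta_p\le C_n/p^{n-1}$ with a controlled absolute constant, and ensuring that the truncation at $p\le C_nH^2$ together with the aggregated error terms from the lattice-point count contribute only $o(H^{n+1})$. This requires careful handling of the exceptional slices -- primes dividing $n$, tuples $(a_1,\ldots,a_n)$ in which the $a_0$-leading coefficient of $D$ vanishes modulo $p$, and the Hensel lifting of multi-roots of $D(a_0,\cdot)$ modulo $p^{n-1}$ -- which, although a priori larger than the generic contribution, ought to be shown to occur on a set of $p$-adic density $O(p^{-(n-1)})$ and hence be absorbable into $C_n$.
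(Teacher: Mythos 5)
Your reduction via Lemma~\ref{first} to the condition $p^{n-1}\mid D(f)$ is a genuinely different route from the paper's, but it contains a gap that the paper itself flags as out of reach. Your conclusion requires $\#\{f: H(f)\le H,\ p^{n-1}\mid D(f)\ \text{for some prime}\ p\}\le (1-\eps)(2H+1)^{n+1}$, which is equivalent to a positive-density \emph{lower} bound for polynomials with $(n-1)$-free discriminant; for $n=3$ this is precisely the assertion that a positive proportion of integer cubics have squarefree discriminant, and the discussion immediately preceding this theorem states that even Poonen's conditional (ABC-based) results on squarefree values of multivariate polynomials do not suffice to establish such a claim. Concretely, the breakdown is in the large-prime range: your per-prime estimate $N_p(H)\le C_n(2H+1)^{n+1}/p^{n-1}+O_n(H^n)$, summed over the roughly $H^2/\log H$ primes $p\le C_nH^2$ permitted by Mahler's bound, accumulates an error of order $H^{n+2}/\log H$, which swamps the main term $H^{n+1}$, and there is no unconditional tool for showing that only $o(H^{n+1})$ polynomials have $p^{n-1}\mid D(f)$ for some $p>H^{\eps}$. (The uniform local density bound $\delta_p\le C_n p^{-(n-1)}$ with a usable constant is also nontrivial, since the singular strata of the discriminant hypersurface --- polynomials with roots of high multiplicity modulo $p$ --- must be shown not to inflate the count, but that part is at least plausibly within reach.)

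The paper's proof avoids the discriminant entirely. It counts the Eisenstein polynomials $g=f(x+s)$ directly through congruence conditions on their coefficients, via the sets $\cH_n(d,H)$ whose cardinality $2^{n+1}H^{n+1}\varphi^2(d)/d^{n+2}+O\bigl(H^{n}2^{\omega(d)}/d^{n-1}\bigr)$ is quoted from~\cite{Hey}, and uses the observation from Lemma~\ref{first} that the shift may be taken with $|s|<d$; this gives at most $2d\,\#\cH_n(d,H)$ polynomials per squarefree modulus $d$, the error term now \emph{decays} in $d$ so the sum over $2\le d\le H$ with $\mu(d)=-1$ is controlled, and the main term is bounded by $2\sinh\bigl(\sum_p p^{-2}\bigr)<1$. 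Note also that the paper only proves the statement for $\csC_n(H)$ (where the Eisenstein shift of $f$ is itself required to have height at most $H$), not for all of $\csE_n(H)$ as your argument would; the extra height constraint is exactly what lets the paper sidestep the discriminant and the attendant squarefree-values problem.
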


\begin{proof}
Let $\csC_n(d,H)$ be the set of all
polynomials
$$f(x+s) = a_n(x+s)^n +a_{n-1}(x+s)^{n-1}+ \ldots +a_1(x+s)+a_0 \in \Z[x]$$ such that:
\begin{enumerate}
\item[(i)] $s \in \mathbb{Z}$,
\item[(ii)] $H(f(x+s)) \le H$,
\item[(iii)] $f(x)$ is Eisenstein with respect to all the prime divisors of $d$,
\item[(iv)] $H(f(x)) \le H$,
\item [(v)] $|s| <  d $.
\end{enumerate}
Note that each element of $\csC_n(d,H)$ may come from
several pairs $(f, s)$.

We also observe that the set of all $f(x)$ described in~(iii) and~(iv) is precisely $\cH_n(d,H)$, where
 $\cH_n(d,H)$ is the set of polynomials~\eqref{eq:poly}
of height at most $H$ and
such that
\begin{enumerate}
\item[(a)] $d \mid a_i$ for $i=0, \ldots, n-1$,
\item[(b)] $\gcd\(a_0/d,d\)=1$,
\item[(c)] $\gcd(a_n,d)=1$.
\end{enumerate}
It then follows from the condition~(v) in the definition of $\csC_n(d,H)$ that
$$\#\csC_n(d,H)\le 2d\#\cH_n(d,H).$$

Using the inclusion exclusion principle implies that
$$
\#\csC_n(H)\leq\sum_{\substack{2 \le d \le H\\ \mu(d)=-1}}\#\csC_n(d,H),
$$
and so
\begin{equation}\label{eq:muH}
\#\csC_n(H)\le\sum_{\substack{2 \le d \le H\\ \mu(d)=-1}} 2d\cH_n(d,H).
\end{equation}

From~\cite{Hey}, we have
\begin{equation}\label{eq:Hvarphi}
\#\cH_n(d,H)=\frac{2^{n+1}H^{n+1}\varphi^2(d)}{d^{n+2}}+
O\( \frac{H^{n }}{d^{n -1}} 2^{\omega(d)}\).
\end{equation}
Combining~\eqref{eq:muH} and~\eqref{eq:Hvarphi} we have
 \begin{equation*}
\begin{split}
\#\csC_n(H)&\leq\sum_{\substack{2 \le d \le H\\ \mu(d)=-1}}2d\(\frac{2^{n+1}H^{n+1}\varphi^2(d)}{d^{n+2}}+O\(\frac{H^n2^{\omega(d)}}{d^{n-1}}\)\)\\
&= 2\sum_{\substack{2 \le d \le H\\ \mu(d)=-1}}
\(\frac{2^{n+1}H^{n+1}\varphi^2(d)}{d^{n+1}}+O\(\frac{H^n2^{\omega(d)}}{d^{n-2}}\)\).\\
 \end{split}
\end{equation*}
 Hence
 \begin{equation*}
\begin{split}
\frac{\#\csC_n(H)}{2H(2H+1)^{n}}&\leq
 2\sum_{\substack{2 \le d \le H\\ \mu(d)=-1}}
 \(\frac{\varphi^2(d)}{d^{n+1}}+O\(\frac{2^{\omega(d)}}{Hd^{n-2}}\)\)\\
&= 2\sum_{\substack{2 \le d \le H\\ \mu(d)=-1}}
\frac{\varphi^2(d)}{d^{n+1}}+
O\(\frac{1}{H}\sum_{2 \le d \le H}^H\frac{2^{\omega(d)}}{d^{n-2}}\)\\
\end{split}
\end{equation*}
for all $n>2$.
It's easy to see that
$$
\sum_{d=2}^H \frac{2^{\omega(d)}}{d^{n-2}}= o(H)
$$
for all $n>2$.
Hence
\begin{equation*}
\frac{\#\csC_n(H)}{2H(2H+1)^{n}}\leq 2\sum_{\substack{2 \le d \le H\\ \mu(d)=-1}}
\frac{\varphi^2(d)}{d^{n+1}} + o(1).
\end{equation*}
So
\begin{equation*}
\begin{split}
\lim_{H \to \infty}\frac{\#\csC_n(H)}{2H(2H+1)^{n}}&\leq 2\sum_{\mu(d)=-1}\frac{\varphi^2(d)}{d^{n+1}} \leq 2\sum_{\mu(d)=-1}\frac{1}{d^{n-1}}=2\sum_{k=0}^\infty \sum_{\omega(d)=2k+1}\frac{1}{d^{n-1}}\\
&\le 2\sum_{k=0}^\infty
\left(\frac{1}{(2k+1)!}\(\sum_{p}\frac{1}{p^{n-1}}\)^{2k+1}\)\\
&\le 2\sinh\(\sum_{p}\frac{1}{p^{n-1}}\)
\le 2\sinh\(\sum_{p}\frac{1}{p^{2}}\).
\end{split}
\end{equation*}
As direct calculations show that
$$\sum_{p}\frac{1}{p^{2}}<0.46,
$$
the result follows.
\end{proof}

We infer from~\cite[Theorem~1]{Coh} that
$$\lim_{H \to \infty} \frac{\#\cI_n(H)}{2H(2H+1)^n}=1,$$
which when combined with Theorem~\ref{thm:CnH} yields
$$\lim_{H \to \infty} \frac{\#(\cI_n(H) \setminus \csC_n(H))}{\#\cI_n(H)}>0,
$$
for $n>2$.

\section{Some numerical  results}\label{results}

As we have mentioned, we believe that the upper and lower limits in
Question~\ref{quest:E-shift} coincide and so the density of shifted Eisenstein
polynomials can be correctly defined.

By using Monte Carlo simulation we have calculated approximations to
the values of  $\#\cE_3(H)$ and $\#\overline{\cE}_3(H)$ which suggests
that   $\#\overline{\cE}_3(H)/\#\cE_3(H)$ is  about $3$, see Table~\ref{tab:E3}.

\begin{table}[ht]\centering
\caption{Monte Carlo Experiments for Cubic Polynomials}
\begin{tabular}{lr}
\toprule
Maximum height of polynomials:  &  $1,000,000$  \\
Number of simulations: &   $20,000$ \\
Shifted Eisenstein polynomials:  &   $1,119$  \\
Eisenstein polynomials:  &   $3,365$\\
Ratio:  &   $3.0$ \\
\bottomrule
\end{tabular}
\label{tab:E3}
\end{table}

For quartics polynomials the ratio $\#\overline{\cE}_4(H)/\#\cE_4(H)$
is approximately 3.6 as shown in Table~\ref{tab:E4}.

\begin{table}[ht]\centering
\caption{Monte Carlo Experiments for Quartic Polynomials}
\begin{tabular}{lr}
\toprule
Maximum height of polynomials:  &  $1,000,000$  \\
Number of simulations: &   $20,000$ \\
Shifted Eisenstein polynomials:  &   $1515$  \\
Eisenstein polynomials:  &   $419$\\
Ratio:  &   $3.6$ \\
\bottomrule
\end{tabular}
\label{tab:E4}
\end{table}

\section{Comments}

It is easy to see that the results of the work can easily be
extended to monic polynomials.

We note that testing whether $f \in \cE_n$
can be done in an obvious way via several greatest common
divisor computations. We however do not know any efficient
algorithm to test whether  $f \in \csE_n$. The immediate approach,
based on Lemma~\ref{first} involves integer
factorisation and thus does not seem to lead to a polynomial time
algorithm. It is possible though, that one can get such an
algorithm via computing greatest common
divisor of pairwise resultants of the coefficients of $f(x+s)$
(considered as polynomials in $s$).

We also note that it is interesting and natural to study
the  {\it affine Eisenstein polynomials\/},
which are polynomials
$f$ such that
$$
(cx+d)^{n} f\(\frac{ax+b}{cx + d}\) \in \cE_n
$$
for some $a,b,c,d \in \Z$.
Studying the distribution of such polynomials is an interesting
open question.

\section{Acknowledgment}
The authors would like to acknowledge the assistance of Hilary Albert with the programming for
Section~\ref{results}.

This work was supported in part by the ARC Grant DP130100237.

\end{document}